\setlist[enumerate]{label={\upshape(\roman*)}}
\setlist[enumerate]{label={\upshape(\roman*)}}
\newcommand{\bD}{\begin{definition}\label}
\newcommand{\eD}{\end{definition}}
\newcommand{\C}{\mathbb C}
\newcommand{\tr}{\operatorname{tr}}
\newcommand{\h}{\mathcal H}
\newcommand{\B}{\mathcal B}
\newcommand{\N}{\mathbb N}
\newcommand{\K}{\mathcal{K}}
\newtheorem*{theorem*}{Theorem}
\newtheorem{theorem}{Theorem}[section]
\newtheorem*{question*}{Question}
\newtheorem{definition}[theorem]{Definition}
\newtheorem{example}[theorem]{Example}
\newtheorem{question}{Question}
\newtheorem{remark}[theorem]{Remark}
\newcommand{\trace}{\operatorname{tr}}
\theoremstyle{plain}
\theoremstyle{remark}
\newtheorem*{keyword}{Key words and Phrases}
\begin{document}

\title[compact quasinilpotent operators]{
On Simultaneous Triangularization of Matrices and quasinilpotency of commutator of compact operators}

\author{Sasmita Patnaik*}
\address{Department of Mathematics and Statistics, Indian Institute of Technology, Kanpur 208016 (INDIA)}
\email{sasmita@iitk.ac.in}

\author{Rahul Sethi**}
\address{School of Mathematics,
Georgia Institute of Technology,
686 Cherry Street
Atlanta, GA 30332-0160 (USA)}
\email{rsethi43@gatech.edu}

\thanks{*Supported by Science and Engineering Research Board, Core Research Grant 002514.\quad\\ ** Partially supported by Science and Engineering Research Board, Core Research Grant 002514.}

\subjclass[2020]
{Primary: 47B02, 47B47, 47A08, 47A10, 15A15 \\
Secondary: 47A65, 47B36}

\maketitle

\begin{abstract}
In this paper we determine a sufficient condition for the quasinilpotency of a commutator of compact operators via block-tridiagonal matrix form associated with a compact operator. We also prove that every compact operator is unitarily equivalent to the sum of a compact quasinilpotent operator and a triangularizable compact operator. \end{abstract}

 \begin{keyword}
Block-tridiagonal matrix forms, Compact operator, Commutator, Spectrum, Quasinilpotent operator.
\end{keyword}

\section{Introduction}

The block-tridiagonal matrix forms associated with bounded operators continue to be an invaluable tool in operator theory.
A well-known result is that every bounded operator on an infinite-dimensional complex Hilbert space admits a block-tridiagonal matrix form whose diagonal blocks are growing exponentially in size \cite[Theorem 20.4]{PPW}. In this paper, we provide two applications of the block-tridiagonal matrix form associated with a compact operator, one in the study of commutator of compact operators, and the other in the study of the structure of a compact operator. We use simple techniques in our results and our work further emphasizes the role of block-tridiagonal matrix forms associated especially with compact operators. 

In Section 2, we offer a matricial view of investigating questions on the quasinilpotency of commutators of compact operators. We do this by connecting the finite matrices that appear in the block-tridiagonal matrix form of a compact operator with the simultaneous triangularization of pairs of finite matrices. We believe this matrix approach has the potential to offer a better understanding of the structure of commutators, in general.
In Section 3, we investigate the structure of a compact operator by slicing its block-tridiagonal matrix form appropriately and discuss the consequences of it. At the end, we make some remarks which reflect that the block-tridiagonal matrix form associated with a compact operator allows one to dissect the compact operator into some nice pieces which exhibit certain universal phenomena that occur in certain parts of it, if not fully.

Our goal in this paper is twofold: link the quasinilpotency of commutators of compact operators with the simultaneous triangularization of finite matrices and study the impact of the block-tridiagonal matrix form on the structure of a compact operator.

We shall use matrix and bounded operator interchangeably depending on the context.

\section{On quasinilpotency of a commutator of compact operators}

In this section, we provide an application of the simultaneous triangularization of finite matrices in the study of commutator of compact operators via the block-tridiagonal matrix form associated with a compact operator. We determine sufficient conditions on $C$ and $Z$ for which the commutator $CZ-ZC$ is quasinilpotent (see Theorem \ref{ST}). The block-tridiagonal matrix form associated with a compact operator with each nonzero block entry being a finite matrix allowed us to implement the theory of simultaneous triangularization of finite matrices in the study of commutators of compact operators. We provide a computationally more tractable approach to investigate when a commutator of compact operators is quasinilpotent.

We first recall the well-known result of A. Albert and B. Muckenhoupt on commutator of finite matrices:
\cite[Theorem]{AM} 
$$\text{For } C\in M_n(\mathbb C), \text{ if } \trace(C)=0, \text{then there exist matrices A and B such that } AB -BA = C.$$

An immediate consequence is: Every nilpotent matrix is a commutator of matrices. \\

For operators on an infinite-dimensional Hilbert space, compact operators are the direct generalization of finite matrices, so a natural question for the compact counterpart is as follows: \textit{is every compact nilpotent operator a commutator of compact operators?} An affirmative answer to the above question stems from a more than half-a-century old open question of Pearcy and Topping posed in 1971: Is every compact operator a commutator of compact operators? In 2017, Ken Dykema and Amudhan Krishnaswamy Usha proved that every compact nilpotent operator is a commutator of compact operators \cite[Theorem 3.2]{DK}. Their work involved the following structure theorem for nilpotents which they modified for entries to act of the same space in \cite[Lemma 2.2]{DK}: for every nilpotent $T \in B(\h)$, i.e., $T^n = 0$ for some $n>1$, there exists a decomposition $\h = \oplus^n_{i=1}\mathcal H_i$ of the Hilbert space $\h$ into the direct sum of an orthogonal family of subspaces $\mathcal H_1, \mathcal H_2, \dots, \mathcal H_n $ such that the matrix of $T$ corresponding to this decomposition has the upper triangular form \cite[Theorem 1]{H71}. This finite block matrix model allowed them to deal with only finitely many operator equations to construct appropriate compact operators $C$ and $Z$ for which $CZ-ZC = T$, though the computations are quite intricate. 


In the case of operators acting on finite-dimensional Hilbert space, where spectrum of an operator being $\{0\}$ implies that the operator must be nilpotent.
In contrast, there are operators acting on infinite-dimensional Hilbert spaces whose spectrum is $\{0\}$, but they are not nilpotent. Such operators are called  quasinilpotent operators. 

For the class of compact quasinilpotent operators that are not nilpotent, we ask the same question as posed earlier for compact nilpotent operators. Our study is centered around the following question.

\begin{question*}\label{QN-0}
Is every compact quasinilpotent operator a commutator of compact operators?
\end{question*}

Unlike the case of nilpotent c

operator, one does not have a finite matrix structure for compact quasinilpotent operators, so addressing such a question in full generality seems intractable! In order to make any headway, alternatively, we pose the following question:


\begin{question}\label{QN-2}
Determine sufficient condition(s) on compact operators $C$ and $Z$ for which the commutator $CZ-ZC$ is a compact quasinilpotent operator. In other words, when is the commutator of compact operators a quasinilpotent operator?
\end{question}


In the literature, some sufficient conditions on compact operators $C$ and $Z$ are provided to answer Question \ref{QN-2}. For instance, if the pair of compact operators $\{C, Z\}$ is simultaneously triangularizable, then $[C, Z]$ is quasinilpotent \cite[Theorem 7.3.3]{HP}; if every word in $CZ$ and $ZC$ is quasinilpotent, then $[C, Z]$ is quasinilpotent (a consequence of Turovskii's theorem \cite[Corollary 8.1.14]{HP} applied to $CZ$ and $ZC$). However, showing the existence of a maximal chain of subspaces invariant under both $C$ and $Z$ for the pair $\{C, Z\}$ to be simultaneously triangularizable or determining if every word in $CZ$ and $ZC$ is quasinilpotent seems to us a difficult task at hand.

Our motivation to answer Question \ref{QN-2} comes from the answer to its finite-dimensional counterpart, namely, if the finite matrices $C$ and $Z$ are simultaneously triangularizable, then $CZ-ZC$ is a strictly upper triangular matrix and hence is nilpotent. We combined this observation and the continuity of spectrum on the set of compact operators to answer Question \ref{QN-2}.     
We determine a sufficient condition involving finite matrices that yields quasinilpotency of the commutator $[C,Z]$ in Theorem \ref{ST}.

A fundamental tool we use throughout in this paper is the block-tridiagonal matrix form associated with a bounded operator given in \cite[Theorem 20.4]{PPW} (see also \cite[Remark 2.2]{L}), which can be extended to a finite set of bounded operators. We state this result here for the set of two bounded operators and in Section 3 for a single bounded operator (see Equation (\ref{single})).

\begin{theorem*} For $A,B \in \B(\h)$, there is an orthonormal basis $\{f_n\}$ with respect to which both $A$ and $B$ have matrices in the block-tridiagonal form. That is, there is a unitary operator $U$ such that for each of them $U^*AU$ and $U^*BU$ simultaneously the central blocks have block size sequence $\left<k_n\right>$ with $k_1 = 1$ and $k_n = 4(5^{n-2})$ for $n>1$. That is, with respect to $\{f_n\}$, the block-tridiagonal matrix forms of $C := U^*AU$ and $Z :=U^*BU$ are given by
\begin{equation}\label{matrix}
C = \begin{pmatrix} C_1 & A_1 & \mathbf{0} & \ldots \\ B_1 & C_2 & A_2 & \ddots\\ \mathbf{0} & B_2 & C_3 & \ddots\\ \vdots & \ddots & \ddots &\ddots\end{pmatrix}
 \quad\text{and} \quad Z = \left(\begin{matrix} Z_1 & X_1 & \mathbf{0} & \ldots \\ Y_1 & Z_2 & X_2 & \ddots\\ \mathbf{0} & Y_2 & Z_3 & \ddots\\ \vdots & \ddots & \ddots &\ddots\end{matrix}\right).
\end{equation}
The central blocks $C_n$ and $Z_n$ are of size $k_n \times k_n$, where $k_1 = 1$ and $k_n = 4(5^{n-2})$ for $n \ge 2$. The blocks $\{A_n\}$ and $\{X_n\}$ are of size $k_n \times 5k_n$, and the blocks $\{B_n\}$ and $\{Y_n\}$ are of size $5k_n \times k_n$. 
\end{theorem*}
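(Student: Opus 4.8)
The plan is to run the standard nested-subspace (``staircase'') construction underlying the single-operator block-tridiagonal form of \cite[Theorem 20.4]{PPW}, but to feed into it all four operators $A$, $A^*$, $B$, $B^*$ at once. The whole point is that closing a chain of subspaces under both operators together with their adjoints is exactly what forces $A$ and $B$ to become simultaneously block-tridiagonal in one common orthonormal basis; the factor $5=1+4$ governing the block sizes will appear as ``the subspace itself together with its four images.''

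First I fix an orthonormal basis $\{e_k\}$ of $\h$ and build an increasing chain of finite-dimensional subspaces $\mathcal M_1\subseteq\mathcal M_2\subseteq\cdots$. I set $\mathcal M_1=\spans\{e_1\}$, so that $\dim\mathcal M_1=1=k_1$, and given $\mathcal M_n$ I put
\[
\mathcal M_{n+1}=\mathcal M_n+A\mathcal M_n+A^*\mathcal M_n+B\mathcal M_n+B^*\mathcal M_n,
\]
enlarged when necessary by the first few vectors of $\{e_k\}$ not yet captured, in order to force $\bigcup_n\mathcal M_n$ to be dense. Writing $\h_n=\mathcal M_n\ominus\mathcal M_{n-1}$ gives an orthogonal decomposition $\h=\bigoplus_n\h_n$, and concatenating orthonormal bases of the $\h_n$ produces the basis $\{f_n\}$; the unitary $U$ carrying $\{e_k\}$ to $\{f_n\}$ is the one in the statement.

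The tridiagonal structure and the block sizes are then read off simultaneously. By construction $A\mathcal M_n\subseteq\mathcal M_{n+1}$ and $A^*\mathcal M_n\subseteq\mathcal M_{n+1}$, and likewise for $B$ and $B^*$. For $x\in\h_n$ the inclusion $Ax\in\mathcal M_{n+1}$ annihilates every block of $A$ strictly below the first subdiagonal, while $A^*x\in\mathcal M_{n+1}$, read through $\langle A^*x,y\rangle=\langle x,Ay\rangle$, annihilates every block strictly above the first superdiagonal; hence $A$ is block-tridiagonal for $\bigoplus_n\h_n$, and the identical computation for $B$, $B^*$ makes $B$ block-tridiagonal in the very same decomposition. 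For the sizes, $\mathcal M_{n+1}$ is the sum of the five subspaces $\mathcal M_n,A\mathcal M_n,A^*\mathcal M_n,B\mathcal M_n,B^*\mathcal M_n$, so $\dim\mathcal M_{n+1}\le5\dim\mathcal M_n$ and hence $\dim\mathcal M_n\le5^{n-1}$; padding each $\mathcal M_n$ up to dimension exactly $5^{n-1}$, which this bound leaves room for, yields $\dim\h_n=5^{n-1}-5^{n-2}=4\cdot5^{n-2}=k_n$ for $n\ge2$, so the central blocks are $k_n\times k_n$ and the off-diagonal blocks $k_n\times5k_n$.

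I expect the only real obstacle to be the bookkeeping that reconciles the two demands placed on the single chain $\{\mathcal M_n\}$: keeping $\dim\mathcal M_n$ pinned at $5^{n-1}$ while still forcing $\bigcup_n\mathcal M_n$ to exhaust $\h$. When the four images $A\mathcal M_n,\dots,B^*\mathcal M_n$ already fill the entire budget $5\dim\mathcal M_n$, there is no slot in which to insert a fresh basis vector at that step, so one must check that such insertions can always be deferred to later steps---where the inequality $\dim\mathcal M_{n+1}\le5\dim\mathcal M_n$ is strict---without ever breaching the size bound, so that every $e_k$ is eventually absorbed. Equivalently, one may simply declare the $k_n$ to be upper bounds and zero-pad; the tridiagonality established above is unaffected by how this slack is allocated.
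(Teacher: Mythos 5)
The paper does not actually prove this statement: it is quoted from \cite[Theorem 20.4]{PPW} (see also \cite[Remark 2.2]{L}) together with the remark that the single-operator construction ``can be extended to a finite set of bounded operators,'' so there is no in-paper proof to compare against. Your sketch is precisely the standard argument behind that citation --- close a chain of finite-dimensional subspaces under $A,A^*,B,B^*$, read off tridiagonality from $\langle Ax,y\rangle=\langle x,A^*y\rangle$, and get the factor $5$ from the five generators --- and it is essentially correct. The one point you flag but do not close, namely that one can always slip in a fresh $e_k$ so that $\dim\mathcal M_n$ is pinned at exactly $5^{n-1}$ while $\bigcup_n\mathcal M_n$ is dense, does in fact go through: for $n\ge 2$ one has $A\mathcal M_{n-1}\subseteq \mathcal M_n\cap A\mathcal M_n$, so either $A\mathcal M_{n-1}\neq\{0\}$ and $\dim(\mathcal M_n+A\mathcal M_n)\le 2\dim\mathcal M_n-\dim A\mathcal M_{n-1}<2\dim\mathcal M_n$, or $A\mathcal M_{n-1}=\{0\}$ and $\dim A\mathcal M_n<\dim\mathcal M_n$; in either case the sum of the five subspaces has dimension strictly less than $5\dim\mathcal M_n$, leaving room at every step $n\ge2$ to absorb the first unabsorbed basis vector. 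With that observation added, your argument is complete and is the same route as the cited source.
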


We next introduce a few notations that are required to state Theorem \ref{ST}. Let $C$ and $Z$ be compact operators on $\h$. We denote by $\{f_n\}$ the orthonormal basis with respect to which the matrix forms of $C$ and $Z$ are given by Equation (\ref{matrix}). Let $P_n$ be the projection onto the subspace spanned by $\{f_1, f_2, \ldots, f_{1 + 4 + \cdots + 4(5^{n-2})}\}$ for $n>1$ and $P_1$ the projection onto the span of $\{f_1\}$. Then, for each $n\geq 1$, the matrix forms for $C'_n := P_nCP_n$ and $Z'_n:= P_nZP_n$ are respectively given by 
{\tiny{
\begin{equation}\label{tmatrix}
C'_n = \left(\begin{matrix} C_1 & A_1 & \mathbf{0} & \mathbf{0} &\ldots &\ldots &\ldots \\ 
B_1 & C_2 & A_2 & \mathbf{0} & \ldots &\ldots & \ldots\\ 
\mathbf{0}& \ddots & \ddots & \ddots & \ddots &\ddots & \ddots\\ 
\mathbf{0} & \ddots & B_{n-2} & C_{n-1} & A_{n-1} & \mathbf{0}&\ldots\\
\vdots & \mathbf{0} &\ddots & B_{n-1} & C_n &\mathbf{0} &\cdots\\
\vdots & \mathbf{0} & \mathbf{0} &\mathbf{0} & \mathbf{0} &\mathbf{0} &\ldots\\
\vdots &\vdots &\vdots&\vdots &\ddots &\ddots &\ddots
\end{matrix}\right) \quad \text{and} \quad Z'_n = \left(\begin{matrix} Z_1 & X_1 & \mathbf{0} & \mathbf{0} &\ldots &\ldots &\ldots \\ 
Y_1 & Z_2 & X_2 & \mathbf{0} & \ldots &\ldots & \ldots\\ 
\mathbf{0}& \ddots & \ddots & \ddots & \ddots &\ddots & \ddots\\ 
\mathbf{0} & \ddots & Y_{n-2} & Z_{n-1} & X_{n-1} & \mathbf{0}&\ldots\\
\vdots & \mathbf{0} &\ddots & Y_{n-1} & Z_n &\mathbf{0} &\cdots\\
\vdots & \mathbf{0} & \mathbf{0} &\mathbf{0} & \mathbf{0} &\mathbf{0} &\ldots\\
\vdots &\vdots &\vdots&\vdots &\ddots &\ddots &\ddots
\end{matrix}\right).
\end{equation}
}}
Since $C$ and $Z$ are compact operators, it follows from \cite[Theorem 2]{BG} that $$\|C_n\|, \|B_n\|, \|A_n\|, \|X_n\|, \|Y_n\|, \|Z_n\| \rightarrow 0 \text{ as } n \rightarrow \infty,$$ which further implies that $\|C-C'_n\| \rightarrow 0$ and $\|Z-Z'_n\| \rightarrow 0$ as $n\rightarrow \infty$. We call $C'_n$ and $Z'_n$ the \textit{block-tridiagonal finite matrix} pieces of $C$ and $Z$ respectively, for $n\geq 1$. It immediately follows that $\|[C, Z] - [C'_n, Z'_n]\| \rightarrow 0$ as $n\rightarrow \infty$. We call this a method of approximating $[C, Z]$ via \textit{its block-tridiagonal finite matrix pieces}. We emphasize for the reader that the finite-rank approximations of a compact operator involved here are sparse matrices (matrices with lots of zeros) and have rigid block-tridiagonal matrix forms, which may not be guaranteed in the matrix representation of a bounded operator with respect to any given orthonormal basis.

We re-write $C'_n$ and $Z'_n$ as $2 \times 2$ operator matrices, where the upper left corner of each operator matrix is a square matrix of size $5^{n-1}$ for $n>1$ and the other entries of the matrix are zero operators. That is, with respect to the Hilbert space decomposition $\mathcal H = \mathcal H_{5^{n-1}} \oplus (\mathcal H_{5^{n-1}})^{\perp}$,

\begin{equation*}\label{E1}
C'_n = \left(\begin{matrix} 
C''_n & \mathbf{0}\\
\mathbf{0} & \mathbf{0}
\end{matrix}\right) \quad \text{and} \quad Z'_n = \left(\begin{matrix} 
Z''_n & \mathbf{0}\\
\mathbf{0} & \mathbf{0}
\end{matrix}\right),\end{equation*}
where 
\begin{equation*}\label{tmatirx}
C''_n =\left(\begin{matrix} 
C_1 & A_1 & \mathbf{0} &\ldots &\mathbf{0} \\ 
B_1 & C_2 & A_2 & \mathbf{0} & \vdots \\ 
\mathbf{0}& \ddots & \ddots & \ddots & \mathbf{0} \\ 
\vdots & \ddots & B_{n-2} & C_{n-1} & A_{n-1} \\
\mathbf{0} & \mathbf{0} &\ddots & B_{n-1} & C_n 
\end{matrix}\right) \quad \text{and} \quad Z''_n = \left(\begin{matrix} 
Z_1 & X_1 & \mathbf{0} &\ldots &\mathbf{0} \\ 
Y_1 & Z_2 & X_2 & \mathbf{0} & \vdots \\ 
\mathbf{0}& \ddots & \ddots & \ddots & \mathbf{0} \\ 
\vdots & \ddots & Y_{n-2} & Z_{n-1} & X_{n-1} \\
\mathbf{0} & \mathbf{0} &\ddots & Y_{n-1} & Z_n 
\end{matrix}\right). 
\end{equation*}

It is worth noting that $[C, Z]$ is quasinilpotent if and only if $[U^*CU, U^*ZU]$ is quasinilpotent for each unitary $U \in \mathcal U(\mathcal H)$. So, it suffices to address Question \ref{QN-2} when both $C$ and $Z$ have block-tridiagonal matrix form with respect to a fixed orthonormal basis $\{f_n\}$. With the notations as above, we are now ready to state our theorem. 

\begin{theorem}\label{ST}
 Let $C$ and $Z$ be compact operators, where both simultaneously have block-tridiagonal matrix form given in Equation (\ref{matrix}). If each pair $\{C''_n,Z''_n\}$ is simultaneously triangularizable for $n\ge 1$, then $CZ-ZC$ is quasinilpotent.
\end{theorem}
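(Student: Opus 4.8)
The plan is to realize $[C,Z]$ as the norm limit of the finite-rank commutators $[C'_n,Z'_n]$, to verify that each approximant is quasinilpotent (in fact nilpotent) using the simultaneous triangularizability hypothesis, and finally to transfer quasinilpotency to the limit by invoking continuity of the spectrum on the set of compact operators. I would first unwind the block structure: with respect to the decomposition $\h = \mathcal H_{5^{n-1}} \oplus (\mathcal H_{5^{n-1}})^{\perp}$ one has $C'_n = C''_n \oplus \mathbf 0$ and $Z'_n = Z''_n \oplus \mathbf 0$, so that $[C'_n,Z'_n] = [C''_n,Z''_n]\oplus \mathbf 0$. Thus the whole question reduces to understanding the single finite matrix $[C''_n,Z''_n]$.

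Next I would invoke the elementary finite-dimensional fact recalled in the introduction. Since $\{C''_n,Z''_n\}$ is simultaneously triangularizable by hypothesis, there is an invertible $S$ conjugating both $C''_n$ and $Z''_n$ into upper triangular form. The commutator of two upper triangular matrices is upper triangular with vanishing diagonal, because for upper triangular $P,Q$ the $i$-th diagonal entries satisfy $(PQ)_{ii}=P_{ii}Q_{ii}=(QP)_{ii}$; hence $S^{-1}[C''_n,Z''_n]S$ is strictly upper triangular and therefore nilpotent. As nilpotency is invariant under similarity, $[C''_n,Z''_n]$ is nilpotent, and so $[C'_n,Z'_n]=[C''_n,Z''_n]\oplus\mathbf 0$ is nilpotent as well. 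In particular $\sigma([C'_n,Z'_n])=\{0\}$, so each block-tridiagonal finite matrix piece of the commutator is quasinilpotent.

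Finally I would pass to the limit. It has already been noted that $\|[C,Z]-[C'_n,Z'_n]\|\to 0$, and every operator involved is compact: the truncations $C'_n,Z'_n$ are finite rank, while $[C,Z]$ is a commutator of compact operators and hence compact. Because $[C,Z]$ is compact, its spectrum is a finite or countable subset of $\C$ accumulating only at $0$, and is therefore totally disconnected; by Newburgh's theorem (equivalently, by the continuity of the spectrum on the set of compact operators) the spectrum is continuous at such a point. Consequently $\sigma([C,Z])=\lim_n \sigma([C'_n,Z'_n])=\{0\}$ in the Hausdorff metric, which is exactly the statement that $[C,Z]$ is quasinilpotent.

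The step I expect to be the genuine obstacle is this last one. The spectrum is only upper semicontinuous on all of $\B(\h)$, and lower semicontinuity — precisely what is needed to prevent a spurious nonzero point of spectrum from materializing in the limit — can fail for general operators. The argument therefore rests on the special feature that the limit commutator is \emph{compact}, so that its totally disconnected spectrum forces continuity at that point. This is where the decision to approximate by rigid block-tridiagonal finite matrix pieces (which keep the approximants finite rank and the limit compact) pays off, and it is the only place where the hypothesis of the theorem, funneled through the nilpotency of every $[C''_n,Z''_n]$, is actually consumed.
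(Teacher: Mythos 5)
Your proposal is correct and follows essentially the same route as the paper: reduce $[C'_n,Z'_n]$ to the finite block $[C''_n,Z''_n]$, use simultaneous triangularization to see that this commutator is strictly upper triangular (hence nilpotent) after a similarity, and pass to the limit via continuity of the spectrum at compact operators. The only differences are cosmetic — you use a general invertible similarity where the paper uses a unitary, and you spell out the justification (Newburgh's theorem applied to the totally disconnected spectrum of the compact limit) for the spectral-continuity step that the paper merely cites, which is indeed the crux since upper semicontinuity alone would not suffice.
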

\begin{proof}
 Since each pair $\{C''_n,Z''_n\}$ is simultaneously triangularizable for $n\ge 1$, there exist unitary matrix $U_n \in M_{5^{n-1}}(\mathbb C)$ such that $U^{*}_nC''_nU_n$ and $U^{*}_nZ''_nU_n$ are both in upper triangular form. This further implies that 
  \begin{equation*}\label{E-1}
(U^{*}_n \oplus I)C'_n(U_n \oplus I) = \left(\begin{matrix} 
U^{*}_nC''_nU_n & \mathbf{0}\\
\mathbf{0} & \mathbf{0}
\end{matrix}\right) \quad \text{and} \quad (U^{*}_n \oplus I)Z'_n(U_n \oplus I) = \left(\begin{matrix} 
U^{*}_nZ''_nU_n & \mathbf{0}\\
\mathbf{0} & \mathbf{0}
\end{matrix}\right),\end{equation*} 
 where $I$ is the identity operator on the orthogonal complement of the finite-dimensional subspace $\mathcal H_n$ of $\h$ of dimension $5^{n-1}$. Therefore, 

 \begin{equation*}\label{E-2}
C'_n = (U_n \oplus I)\left(\begin{matrix} 
U^{*}_nC''_nU_n & \mathbf{0}\\
\mathbf{0} & \mathbf{0}
\end{matrix}\right)(U^{*}_n \oplus I) \quad \text{and} \quad Z'_n = (U_n \oplus I)\left(\begin{matrix} 
U^{*}_nZ''_nU_n & \mathbf{0}\\
\mathbf{0} & \mathbf{0}
\end{matrix}\right)(U^{*}_n \oplus I).\end{equation*}  
 This implies that the commutator 
 
 \begin{equation*}\label{E-3}
[C_n',Z_n'] = C'_nZ'_n - Z'_nC'_n = (U_n \oplus I)\left(\begin{matrix} 
U^{*}_n(C''_nZ''_n - Z''_nC''_n)U_n & \mathbf{0}\\
\mathbf{0} & \mathbf{0}
\end{matrix}\right)(U^{*}_n \oplus I) \end{equation*}  
is nilpotent because $U^{*}_n(C''_nZ''_n - Z''_nC''_n)U_n$  is a strictly upper triangular finite matrix and so the matrix on the left-hand side of the above latter equality is nilpotent.  Moreover, since $\|C_n' - C\| \rightarrow 0$ and  $\|Z_n' - Z\| \rightarrow 0$ as $n\rightarrow \infty$, we have $\|[C_n',Z_n'] - [C,Z]\| \rightarrow 0$ as $n \rightarrow \infty$. By the continuity of the spectrum on compact operators, $\sigma([C_n',Z_n']) \to \sigma([C,Z])$. However, $\sigma([C_n',Z_n']) = \{0\}$ for each $n\ge 1$, and so $\sigma([C,Z]) = \{0\}$, i.e., $[C,Z]$ is quasinilpotent.
\end{proof}

\begin{remark}
(i) Observe that the quasinilpotency of the commutator $[C, Z]$ is determined by the simultaneous triangularization of the finite matrix pieces of $C$ and $Z$ given in Equation (\ref{tmatrix}). The advantage of this approach is that there are several efficient and tractable ways of determining the simultaneous triangularizability of a pair of finite matrices, for instance, see \cite[Chapters 1-4]{HP} that includes many classical theorems such as Engel, McCoy, Levitzki, Kolchin, and Kaplansky theorems, to name a few.\\
(ii) In the special case when $B_n = Y_n = 0$ for $n\geq1$, it suffices to have each pair $\{C_n, Z_n\}$ simultaneously triangularizable for $[C, Z]$ to be quasinilpotent. This is an easy example of compact operators $C$ and $Z$ for which the hypothesis of Theorem \ref{ST} holds.
\end{remark}

The converse of Theorem \ref{ST} does not hold in general. Towards this, we construct $C,Z \in \K(\h)$, which are in block-tridiagonal matrix forms given in Equation (\ref{matrix}) such that $CZ-ZC$ is quasinilpotent, but the pair $\{C''_n,Z''_n\}$ is not triangularizable for some $n\in \N$. To do so, we first observe the following: for each $n\ge 3$, one can construct matrices $A_n,B_n \in M_n(\C)$ and a polynomial $p_n(x,y) \in \C[x,y]$ such that $[A_n,B_n]$ is nilpotent, but $p_n(A_n,B_n)[A_n,B_n]$ is not nilpotent. Indeed, take
$$A_n = \begin{pmatrix}
    0 & 1 & 0 & \cdots & 0 \\
    0 & 0 & 1 & \cdots & 0 \\
    \vdots & \vdots & \vdots & \ddots & \vdots \\
    0 & 0 & 0 & \cdots & 1 \\
    0 & 0 & 0 & \cdots & 0 \\
\end{pmatrix}_{n\times n} \quad\quad B_n = \begin{pmatrix}
    0 & 0 & 0 & \cdots & 0 \\
    0 & 0 & 0 & \cdots & 0 \\
    \vdots & \vdots & \vdots & \ddots & \vdots \\
    0 & 0 & 0 & \cdots & 0 \\
    1 & 0 & 0 & \cdots & 0 \\
\end{pmatrix}_{n\times n}.$$
Then,
\begin{equation*}
A_nB_n-B_nA_n = \begin{pmatrix}
    0 & 0 & 0 & \cdots & 0 \\
    \vdots & \vdots & \vdots & \ddots & \vdots \\
    0 & 0 & 0 & \cdots & 0 \\
    0 & 0 & 0 & \cdots & 0 \\
    1 & 0 & 0 & \cdots & 0 \\
    0 & -1 & 0 & \cdots & 0 \\
\end{pmatrix}_{n\times n} \text{is nilpotent, but }
A_n^{n-2}(A_nB_n-B_nA_n) = \begin{pmatrix}
    1 & 0 & 0 & \cdots & 0 \\
    0 & -1 & 0 & \cdots & 0 \\
    0 & 0 & 0 & \cdots & 0 \\
    \vdots & \vdots & \vdots & \ddots & \vdots \\
    0 & 0 & 0 & \cdots & 0 \\
\end{pmatrix}_{n\times n}
\end{equation*}
is clearly not nilpotent.  It is easy to see that $\|A_n\| = 1= \|B_n\|$ for all $n\ge 3$. Define 
\begin{equation}\label{pq}
P_n = \frac{1}{n}A_n  \quad \text{and }\quad  Q_n = \frac{1}{n}B_n
\end{equation} for all $n\ge 3$. So, $\|P_n\| = \frac{1}{n}= \|Q_n\|$, and hence $\|P_n\|, \|Q_n\| \rightarrow 0$ as $n\rightarrow \infty$.
Observe that 
$$P_nQ_n-Q_nP_n = \begin{pmatrix}
    0 & 0 & 0 & \cdots & 0 \\
    \vdots & \vdots & \vdots & \ddots & \vdots \\
    0 & 0 & 0 & \cdots & 0 \\
    0 & 0 & 0 & \cdots & 0 \\
    \frac{1}{n^2} & 0 & 0 & \cdots & 0 \\
    0 & \frac{-1}{n^2} & 0 & \cdots & 0 \\
\end{pmatrix}_{n\times n},$$
which is nilpotent, but for $p_n(x, y) = x^{n-2}$,
\begin{equation}\label{notnil}
p_n(P_n,Q_n)[P_n,Q_n] = P_n^{n-2}(P_nQ_n-Q_nP_n) = \begin{pmatrix}
    n^{-n} & 0 & 0 & \cdots & 0 \\
    0 & -n^{-n} & 0 & \cdots & 0 \\
    0 & 0 & 0 & \cdots & 0 \\
    \vdots & \vdots & \vdots & \ddots & \vdots \\
    0 & 0 & 0 & \cdots & 0 \\
\end{pmatrix}_{n\times n} \text{is not nilpotent}.
\end{equation}

We are now ready to construct $C$ and $Z$ for which $CZ-ZC$ is quasinilpotent, but the pair $\{C''_n,Z''_n\}$ is not triangularizable for some $n\in \N$.  

\begin{example} 
For $k_1 = 1$ and $k_n = 4(5^{n-2})$ for each $n\ge 2$, let 
$$C = 
\begin{pmatrix}
C_1 & 0 & 0 & \cdots \\
0 & C_2 & 0 & \cdots \\
0 & 0 & C_3 & \cdots \\
\vdots & \vdots & \vdots & \ddots \\
\end{pmatrix} \quad \text{and} \quad Z = 
\begin{pmatrix}
Z_1 & 0 & 0 & \cdots \\
0 & Z_2 & 0 & \cdots \\
0 & 0 & Z_3 & \cdots \\
\vdots & \vdots & \vdots & \ddots \\
\end{pmatrix},$$
where $C_n := P_{k_n}$ and $Z_n := Q_{k_n}$ for $n\ge 1$. 

We first prove that $[C, Z]$ is a compact quasinilpotent operator. Clearly, as $\|P_{k_n}\|, \|Q_{k_n}\| \rightarrow 0$ as $n\rightarrow \infty$, we have $\|C_n\|, \|Z_n\| \rightarrow 0$ as $n\rightarrow \infty$.  Therefore, it follows from \cite[Theorem 2]{BG} that $C$ and $Z$ are compact operators. 

By construction, $[C_n,Z_n]$ is nilpotent for each $n\ge 1$ and $[C_n, Z_n] \rightarrow 0$ as $n\rightarrow \infty$. The commutator $CZ-ZC$ is given by 
$$[C,Z] = \begin{pmatrix}
[C_1, Z_1] & 0 & 0 & \cdots \\
0 & [C_2, Z_2] & 0 & \cdots \\
0 & 0 & [C_3, Z_3] & \cdots \\
\vdots & \vdots & \vdots & \ddots \\
\end{pmatrix}.$$
For each $n\geq 1$, the truncated sequence of nilpotent operators constructed from the above matrix form of $[C, Z]$ is defined as  $$[C,Z]_n :=  \begin{pmatrix}
[C_1, Z_1] & 0 & 0 & \cdots & 0 &0 &\ldots \\
0 & [C_2, Z_2] & 0 & \cdots & 0 & 0 &\ldots \\
\vdots & \vdots & \vdots & \ddots & \vdots & \vdots & \ldots \\
0 & 0 & 0 & \cdots & [C_n, Z_n] & 0 & \ldots\\
0 & 0 & 0 & \ldots & 0 & 0 & \ddots \\
\vdots & \vdots & \vdots & \vdots & \vdots & \ddots & \ddots
\end{pmatrix}.$$
A straightforward computation shows that $\|[C, Z]_n - [C, Z]\| \rightarrow 0$ as $n \rightarrow \infty$. Moreover, since the norm limits of compact quasinilpotents (here in particular, $[C, Z]_n$ are nilpotents) is again a compact quasinilpotent \cite[Corollary 7.2.11]{HP}, the commutator $[C, Z]$ is a compact quasinilpotent operator. 

Following the matrix notation from Equation (\ref{tmatrix}), we have
$$C_n' = \begin{pmatrix}
C_1 & 0 & 0 & \cdots & 0 &0 &\ldots \\
0 & C_2 & 0 & \cdots & 0 & 0 &\ldots \\
0 & 0 & C_3 & \cdots & 0 & 0 &\ldots\\
\vdots & \vdots & \vdots & \ddots & \vdots & \vdots & \ldots \\
0 & 0 & 0 & \cdots & C_n & 0 & \ldots\\
0 & 0 & 0 & \ldots & 0 & 0 & \ddots \\
\vdots & \vdots & \vdots & \vdots & \vdots & \ddots & \ddots
\end{pmatrix} \quad \text{and} \quad 
Z_n' = \begin{pmatrix}
Z_1 & 0 & 0 & \cdots & 0 &0 &\ldots \\
0 & Z_2 & 0 & \cdots & 0 & 0 &\ldots \\
0 & 0 & Z_3 & \cdots & 0 & 0 &\ldots\\
\vdots & \vdots & \vdots & \ddots & \vdots & \vdots & \ldots \\
0 & 0 & 0 & \cdots & Z_n & 0 & \ldots\\
0 & 0 & 0 & \ldots & 0 & 0 & \ddots \\
\vdots & \vdots & \vdots & \vdots & \vdots & \ddots & \ddots
\end{pmatrix},$$
where $C_n, Z_n \in M_{k_n}(\mathbb C)$ for each $n\ge 1$. We next show that the pair $\{C''_n,Z''_n\}$ is not triangularizable for each $n\geq 2$. (The target was to find some $n\in \N$ such that $\{C''_n,Z''_n\}$ is not triangularizable. Interestingly, this example yields more than we need - we shall prove that $\{C''_n,Z''_n\}$ is not triangularizable for any $n \ge 2$.) The proof is by contradiction.

Indeed, assume that $\{C''_n,Z''_n\}$ is triangularizable for some $n \ge 2$. Then, by McCoy's Theorem for finite matrices (\cite[Theorem 1.3.4]{HP}), $p(C''_n,Z''_n)[C''_n,Z''_n]$ is nilpotent for every $p(x,y) \in \mathbb C[x,y]$. In particular, choose $p(x,y) = p_{k_n}(x,y) = x^{k_n - 2}$. We recall that $k_1 = 1$ and $k_n = 4(5^{n-2})$ for $n>1$. We have
{\tiny{
\begin{equation*}
p(C'_n,Z'_n)[C'_n,Z'_n] = \begin{pmatrix}
p(C_1,Z_1)[C_1,Z_1] & 0 & 0 & \cdots & 0 &0 &\ldots \\
0 & p(C_2,Z_2)[C_2,Z_2] & 0 & \cdots & 0 & 0 &\ldots \\
0 & 0 & \ddots & \cdots & 0 & 0 &\ldots\\
\vdots & \vdots & \vdots & \ddots & \vdots & \vdots & \ldots \\
0 & 0 & 0 & \cdots & p(C_n,Z_n)[C_n,Z_n] & 0 & \ldots\\
0 & 0 & 0 & \ldots & 0 & 0 & \ddots \\
\vdots & \vdots & \vdots & \vdots & \vdots & \ddots & \ddots
\end{pmatrix} = \left(\begin{matrix} 
 p(C''_n,Z''_n)[C''_n,Z''_n]& \mathbf{0}\\
\mathbf{0} & \mathbf{0}
\end{matrix}\right).
\end{equation*}
}}

Note that for $k_1 = 1$, the $1\times 1$ matrix $p(C_1,Z_1)[C_1,Z_1]$ is always zero and for each $n>1$, the matrix $p(C_n,Z_n)[C_n,Z_n]$ is of size greater than $3$. For convenience, let $T_n := p(C'_n,Z'_n)[C'_n,Z'_n]$ and $S_j := p(C_j,Z_j)[C_j,Z_j]$ for each $1\le j\le n$. Recall that by hypothesis $C_j := P_{k_j}, Z_j := Q_{k_j}$, where $P_{k_j}, Q_{k_j}$ are given by Equation (\ref{pq}). Since the matrix of $T_n$ has finitely many nonzero blocks, it follows from \cite[Chapter 11, \S 98]{H} that for $n>1$,
$$\sigma(T_n) = \bigcup_{j=1}^n \sigma(S_j).$$
Since $p(C''_n,Z''_n)[C''_n,Z''_n]$ is nilpotent, $T_n$ is also nilpotent, and thus we have $\sigma(T_n) = \{0\}$. Hence $\bigcup_{j=1}^n \sigma(S_j) = \{0\}$. This implies that $\sigma(S_j) = \{0\}$ for all $1\le j\le n$. However, for $j>1$, $\sigma(S_j) \ne \{0\}$ as $S_j$ is not nilpotent by construction in Equation (\ref{notnil}) (in there, replace $j$ with $k_j = 4(5^{j-2}) > 3$), and hence a contradiction. Therefore, $\{C''_n,Z''_n\}$ is not triangularizable for all $n\geq 2$. 
\end{example}

\begin{remark} The construction above works more generally when $\{k_n\}_{n=1}^\infty$ is any strictly increasing sequence of natural numbers. In fact, if $k_1 = 1$, then $\{C''_n,Z''_n\}$ is not triangularizable for all $n\geq 2$. If $k_1 > 1$, then $\{C''_n,Z''_n\}$ is not triangularizable for all $n\geq 1$.
\end{remark}

\section{On the structure of a compact operator via its block-tridiagonal matrix form}
In \cite[Theorem 2]{FS}, Fong and Sourour showed that every bounded operator on a Hilbert space is the sum of two quasinilpotent operators if and only if it is not a nonzero scalar plus a compact operator. As a consequence, they showed in \cite[Corollary 2]{FS} that every compact operator is a sum of two compact quasinilpotent operators. 

In this section, we establish an alternate structure theorem for a compact operator through the direct use of the block-tridiagonal matrix form associated with it. 
In the finite-dimensional case, it is well-known that every matrix is unitarily equivalent to an upper triangular matrix, which is the sum of a diagonal matrix and a nilpotent matrix. 
Here we prove an infinite-dimensional analog of this finite-dimensional case. That is, every compact operator is unitarily equivalent to the sum of an (upper) triangularizable compact operator (i.e., whose matrix is upper triangular via a unitary matrix) and a compact quasinilpotent operator (whose matrix is strictly lower triangular). This is discussed below and summarized in Theorem \ref{compact}.

For any compact operator $T$ (more generally, for any bounded operator), by \cite[Theorem 20.4]{PPW}, there is an orthonormal basis $\{f_n\}$ such that with respect to that basis the matrix representation of $K:= W^*TW$ has a block-tridiagonal matrix form, where the central blocks $C_n$ have block size sequence $\{k_n\}$ given by $k_1 = 1$ and $k_n = 2(3^{n-2})$ for $n > 1$. (Here $W$ is the unitary associated with the change of basis.) That is, with respect to the orthonormal basis $\{f_n\}$ obtained in \cite[Theorem 20.4]{PPW}, one has
\begin{equation}\label{single}
K = \left(\begin{matrix} C_1 & A_1 & 0 & \ldots \\ B_1 & C_2 & A_2 & \ddots\\ 0 & B_2 & C_3 & \ddots\\ \vdots & \ddots & \ddots &\ddots\end{matrix}\right). 
\end{equation}

We then split this matrix in two pieces as follows.
\begin{equation}\label{pieces}
K = \left(\begin{matrix} C_1 & A_1 & 0 & \ldots \\ B_1 & C_2 & A_2 & \ddots\\ 0 & B_2 & C_3 & \ddots\\ \vdots & \ddots & \ddots &\ddots\end{matrix}\right) = \underbrace{\left(\begin{matrix} C_1 & A_1 & 0 & \ldots \\ 0 & C_2 & A_2 & \ddots\\ 0 & 0 & C_3 & \ddots\\ \vdots & \ddots & \ddots &\ddots\end{matrix}\right)}_S + \underbrace{\left(\begin{matrix} 0 & 0 & 0 & \ldots \\ B_1 & 0 &  0 & \ddots\\ 0 & B_2 & 0 & \ddots\\ \vdots & \ddots & \ddots &\ddots\end{matrix}\right)}_Q,
\end{equation}
where the``first piece" of $K$ denoted by $S$ is an upper triangularizable compact operator and the ``second piece" of $K$ denoted by $Q$ is a compact quasinilpotent operator. Indeed, since $K$ is a compact operator, it follows from \cite[Theorem 2]{BG} that  $\|A_n\|, \|B_n\|, \|C_n\| \rightarrow 0$ as $n \rightarrow \infty$. Since  $\|B_n\| \rightarrow 0$ as $n \rightarrow \infty$, so $Q$ is a compact operator. For each $n\geq 1$, let
\begin{equation*}\label{nil}
F_n := \left(\begin{matrix} 0 & 0 & 0 & \ldots & \ldots& \ldots & \ldots \\ 
B_1 & 0 &  0 & \ddots & \ddots & \ddots & \ddots\\ 
0 & B_2 & 0 & \ddots & \ddots & \ddots & \ddots\\
 \vdots & \ddots & \ddots &\ddots & \ddots & \ddots & \ddots \\
 \vdots & \ddots & \ddots & B_n &\ddots & \ddots & \ddots \\
 \vdots & \ddots & \ddots & \ddots &0 &\ddots & \ddots\\
 \vdots & \ddots & \ddots &\ddots &\ddots&\ddots&\ddots
 \end{matrix}\right).
\end{equation*}
Note that $\|Q- F_n\| \rightarrow 0$ as $n \rightarrow \infty$, where $F_n$'s are finite-rank nilpotent operators.  And as the norm limit of nilpotent operators is quasinilpotent, therefore $Q$ is quasinilpotent. 

Consider the compact operator $S$ in Equation (\ref{pieces}). Since $C_n$ is a square matrix for each $n\geq 1$, $C_n$ is upper triangularizable. That is, for each $n\geq 1$, there exists a unitary matrix $U_n$ such that $U_n^*C_nU_n = \Delta_n$, where $\Delta_n$ is an upper triangular matrix. Consider the unitary operator $U := \oplus U_n$ on the Hilbert space $\h$. Then, a straightforward computation shows that $U^*SU$ is an upper triangular matrix. Therefore, $S$ represents a compact operator whose matrix is upper triangularizable. Let $\Delta :=U^*SU$. Then the matrix form associated with $\Delta$ is given by


\begin{equation}\label{triangle}
\Delta = \left(\begin{matrix} \Delta_1 & A'_1 & 0 & \ldots \\ 0 & \Delta_2 & A'_2 & \ddots\\ 0 & 0 & \Delta_3 & \ddots\\ \vdots & \ddots & \ddots &\ddots\end{matrix}\right).
\end{equation}

Consider the same unitary operator $U$ constructed above using the $U_n$'s.
Since $K = S + Q$ (Equation (\ref{pieces})), 

\begin{equation}\label{compact_pieces}
U^*KU =  \left(\begin{matrix} \Delta_1 & A'_1 & 0 & \ldots \\ 0 & \Delta_2 & A'_2 & \ddots\\ 0 & 0 & \Delta_3 & \ddots\\ \vdots & \ddots & \ddots &\ddots\end{matrix}\right)  + \left(\begin{matrix} 0 & 0 & 0 & \ldots \\ 
U^*_2B_1U_1 & 0 &  0 & \ddots\\ 
0 & U_3^*B_2U_2 & 0 & \ddots\\ 
\vdots & \ddots & \ddots &\ddots\end{matrix}\right) =  \Delta + U^*QU.
\end{equation}

Since $K = W^*TW$, one has $U^*W^*TWU = \Delta + U^*QU$. Let $U_0 := WU$, which is a unitary operator. Therefore, for a given compact operator $T$, we have
\begin{equation}\label{decom}
U_0^*TU_0 = \Delta + U^*QU,
\end{equation}
where $\Delta$ is an upper triangular matrix representing a compact operator and $U^*QU$ is a quasinilpotent matrix representing a compact quasinilpotent operator.

Recall that in the finite-dimensional case, a finite matrix $A$ is unitarily equivalent to an upper triangular matrix, i.e., $V^*AV = D + Q$, where $D$ is a diagonal matrix (upper triangular matrix) and $Q$ is a nilpotent matrix (strictly lower triangular matrix) for some unitary matrix $V$. In the language of the spectrum of a finite matrix, $V^*AV = D + Q$ implies that $\sigma(V^*AV - Q) = \{0\}$. 

One can view Equation (\ref{decom}) as an infinite-dimensional analog of the above mentioned finite-dimensional matrix decomposition in the following theorem.
 
To summarize:

\begin{theorem}(A structure theorem for compact operators)\\\label{compact}
 Every compact operator $T$ is unitarily equivalent to an upper triangularizable compact operator and a compact quasinilpotent operator. Moreover, following the notation from Equation (\ref{pieces}), $\sigma(U_0^*TU_0 - \Delta) = \{0\}$. In particular, if $B_n = 0$ for $n\geq 1$, then $K$, i.e., $W^*TW$ is triangularizable.
 
 \smallskip
Alternatively, every compact operator is unitarily equivalent to a compact operator $K$, which has a normal part $N$ such that $K-N$ is a sum of two quasinilpotent compact operators. (One can see this by splitting $U^*SU$ in Equation (\ref{triangle}) into the sum of a diagonal matrix and a strictly upper triangular matrix.)
\end{theorem}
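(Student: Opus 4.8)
The plan is to read Theorem \ref{compact} as an assembly of the facts already laid out in Equations (\ref{single})--(\ref{decom}), organized into its four claims. First I would invoke \cite[Theorem 20.4]{PPW} to pass from $T$ to $K := W^*TW$ in the block-tridiagonal form (\ref{single}), and record the decay $\|A_n\|,\|B_n\|,\|C_n\|\to 0$ supplied by \cite[Theorem 2]{BG}, since \emph{every} compactness and quasinilpotency assertion below rests on it. I then split $K = S + Q$ as in (\ref{pieces}), where $S$ carries the diagonal blocks $C_n$ together with the superdiagonal blocks $A_n$, while $Q$ carries only the subdiagonal blocks $B_n$. The decay of $\|B_n\|$ shows $Q$ is compact, and then $S = K - Q$ is compact as well.

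Next I would triangularize $S$. Each $C_n$ is a finite matrix, so there is a unitary $U_n$ with $U_n^* C_n U_n =: \Delta_n$ upper triangular; setting $U := \bigoplus_n U_n$ (a unitary on $\h$, being a direct sum of unitaries on the finite-dimensional blocks), conjugation sends each superdiagonal block $A_n$ to $A_n' := U_n^* A_n U_{n+1}$, which stays in the superdiagonal position, so $U^*SU = \Delta$ is upper triangular as in (\ref{triangle}). Combining with $K = S+Q$ gives $U^*KU = \Delta + U^*QU$ (\ref{compact_pieces}), and with $U_0 := WU$ one obtains $U_0^*TU_0 = \Delta + U^*QU$ (\ref{decom}), which is the first assertion once $\Delta$ is recognized as an upper-triangularizable compact operator and $U^*QU$ as a compact quasinilpotent one. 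The spectral refinement is then immediate: $U_0^*TU_0 - \Delta = U^*QU$ is unitarily equivalent to $Q$, and since the spectrum is a unitary invariant, $\sigma(U_0^*TU_0 - \Delta) = \sigma(Q) = \{0\}$ once $Q$ is known to be quasinilpotent.

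The step I expect to carry the actual content is the quasinilpotency of $Q$. The pitfall to flag is that being strictly lower triangular with respect to $\{f_n\}$ is \emph{not} by itself sufficient — the unilateral shift is strictly lower triangular yet has spectrum the closed unit disk — so the argument must exploit compactness rather than the triangular shape alone. Concretely, I would approximate $Q$ in norm by the finite-rank truncations $F_n$ (each a strictly lower triangular finite matrix, hence nilpotent), with $\|Q - F_n\| \to 0$ forced by $\|B_n\|\to 0$, and then appeal to \cite[Corollary 7.2.11]{HP}: a norm limit of nilpotent (more generally quasinilpotent) operators is quasinilpotent. This gives $\sigma(Q)=\{0\}$ and completes the first two claims. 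The ``in particular'' case is a specialization: if $B_n = 0$ for all $n$ then $Q = 0$, so $K = S$ and $U^*KU = \Delta$ is upper triangular, i.e. $K$ is triangularizable.

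Finally, for the alternative formulation I would split the already upper-triangular $\Delta$ into its diagonal part $N := \diag(\Delta)$ and its strictly upper triangular part $R := \Delta - N$. The operator $N$ is diagonal, hence normal, and compact (its diagonal entries are eigenvalues of $\Delta_n$, bounded by $\|C_n\|\to 0$); consequently $R$ is compact, and $R$ is quasinilpotent by exactly the truncation-plus-\cite[Corollary 7.2.11]{HP} argument used for $Q$, now with strictly upper triangular finite truncations. Writing $K_0 := U_0^*TU_0$, one then has $K_0 - N = R + U^*QU$, exhibiting $K_0 - N$ as a sum of two compact quasinilpotent operators and recovering the Fong--Sourour phenomenon \cite[Corollary 2]{FS} with the extra normal summand made explicit. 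The only routine checks left are that the direct sums and truncations define genuine bounded (indeed compact) operators, all of which follow from the uniform decay of the block norms.
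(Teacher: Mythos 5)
Your proposal is correct and follows essentially the same route as the paper: the block-tridiagonal form from \cite[Theorem 20.4]{PPW}, the splitting $K=S+Q$ of Equation (\ref{pieces}), blockwise triangularization of $S$ via $U=\oplus U_n$, quasinilpotency of $Q$ from the finite-rank nilpotent truncations $F_n$ together with \cite[Corollary 7.2.11]{HP}, and the diagonal-plus-strictly-upper splitting of $\Delta$ for the alternative formulation. Your added caution that strict lower triangularity alone does not give quasinilpotency (the shift example) is a sound remark but does not change the argument.
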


\section*{Concluding remarks}
We filter out some observations from Sections 2-3 that are perhaps worth mentioning when we consider the block-tridiagonal matrix form associated with a  compact operator or a pair of compact operators.
In particular, observations (i)-(iii) below reveal that a compact operator is always ``well-behaved" in appropriate sense after removing a certain piece of it.

\begin{enumerate}[label=(\roman*)]
\item Unlike the finite-dimensional case, where every matrix is triangularizable, a compact operator acting on an infinite-dimensional Hilbert space need not be triangularizable. But, if the compact operator is in its block-tridiagonal matrix form given by Equation (\ref{pieces}), then there is a piece of it, namely, $S$ in Equation (\ref{pieces}), which is always (upper) triangularizable. 

\item In general, a pair of compact operators $\{K_1, K_2\}$ need not be simultaneously triangularizable. But, if $K_1$ and $K_2$ both are simultaneously in block-tridiagonal matrix forms, then by extracting its triangularizing pieces, the pair $\{K_1 - S_1,  K_2 - S_2\}$ is \textit{always} simultaneously (upper) triangularizable. Indeed, the set $\{K_1 - S_1,  K_2 - S_2\} = \{Q_1, Q_2\}$, where $K_1 = S_1 + Q_1, K_2 = S_2 +Q_2$ given by Equation (\ref{pieces}). We observe that $p(Q_1, Q_2)[Q_1,Q_2]$ is again quasinilpotent for each polynomial $p(Q_1, Q_2)$. So, by McCoy's  Theorem \cite[Theorem 7.3.3]{HP}, the pair $\{A, B\}$ is simultaneously triangularizable. \textit{That is, even if $\{K_1, K_2\}$ may not be triangularizable, $\{K_1 - S_1,  K_2 - S_2\}$ is always simultaneously triangularizable.} (This observation can be extended to a finite set of compact operators $\{K_1, \dots, K_N\}$, which follows from \cite[Corollary 7.3.4]{HP}.)

\item The matrix splitting given in Equation (\ref{pieces}) of the block-tridiagonal matrix form associated with a compact operator (or a pair of compact operators) touches upon another question on commutators, namely, on commutators that are trace class with trace zero -- for $C, Z \in B(\mathcal H)$, with $C$ or $Z$ a special operator and that $CZ-ZC$ is trace class, then is $\tr(CZ-ZC) = 0$? (See \cite{K} and references therein.) Because the trace is unitarily invariant and that $T$ is trace class if and only if $U^*TU$ is trace class for each unitary operator $U$, without loss of generality, the above question can be restated for when both $C$ and $Z$ are in block-tridiagonal matrix form. We have the following: for $C$ and $Z$ compact operators each with block-tridiagonal matrix form, if the commutator $[C - S_1,  Z - S_2]$ is trace class, then $\tr([C - S_1,  Z - S_2]) = 0$, where $C = S_1 + Q_1$ and $Z = S_2 + Q_2$. Indeed, $\tr([C - S_1,  Z - S_2]) = \tr(Q_1Q_2 - Q_2Q_1)$ and note that the diagonal entries of $Q_1Q_2 - Q_2Q_1$ are all equal to zero, so the trace is zero.
(In \cite[Theorems 4-5]{K}, F. Kittaneh proved that $\tr(CZ-ZC) = 0$ under certain special sufficient conditions on $C$ and $Z$.) 

\item For a compact operator $K$ with matrix decomposition given in Equation (\ref{pieces}), if for each $n\geq 1$, $C_n$ is a nilpotent matrix, then $K$ is \textit{unitarily} similar to a \textit{zero-diagonal }operator. In this case, $K$ is a sum of two quasinilpotent operators.
\end{enumerate}

\end{document}